\newtheorem{thrm}{Theorem}[section]
\newtheorem{lem}[thrm]{Lemma}
\newtheorem{prop}[thrm]{Proposition}
\newtheorem{definition}[thrm]{Definition}
\numberwithin{equation}{section}
\newcommand{\Z}{\mathbb{Z}}
\newcommand{\F}{\mathbb{F}}
\newcommand{\vr}{\vspace*{0.5cm}}
\author{Samir Assuena and C\'esar Polcino Milies }
\address{ Centro Universit\'ario da FEI, Av. Humberto de Alencar Castelo Branco 3972, S\~ao Bernardo do Campo, CEP:09850-901, Brazil and Centro Universit\'ario do Instituto Mau\'a de Tecnologia-I.M.T., 
Pra\c ca Mau\'a 1, S\~ao Caetano do Sul, SP, CEP: 09580-900,  Brazil}
\email{samir.assuena@fei.edu.br and samir.assuena@maua.br}
\address{Instituto de Matem\'{a}tica e Estat\'{i}stica, Universidade de S\~{a}o Paulo, Caixa Postal 66.281, CEP 05314-970, Brazil and CMCC, Universidade Federal do ABC,  Av. dos Estados, 5001, Santo Andr�, SP, CEP 09210-580, Brazil}  
\email{polcino@ime.usp.br and polcino@ufabc.edu.br}
\thanks{Research supported by CNPq Proc. 300243/79-0(RN), 579650/2008-1 and 141396/2009-1  }
\keywords{Semisimple Group Algebras; Finite Groups; Dihedral Group; Primitive Idempotents}
\subjclass{Primary 94B05, 94B15, Secondary 20C05, 16S34}
\begin{document}

\title{Good codes from dihedral groups}

\begin{abstract}
We give a simple construction of codes from left ideals in  group algebras of certain dihedral groups and give an example to show  that they can produce codes with weights equal to those of the best known codes of the same length.
\end{abstract}
\maketitle

\section{Introduction}

Throughout this paper, \textit{G} will always denote a finite group and $\F$ a field such that $char(\F) \nmid |G|$. 

We recall that a group $G$ is   \textit{metacyclic} if $G$ contains a cyclic normal subgroup $H=\left\langle a\right\rangle$ such that the factor group $G/H=\left\langle bH\right\rangle$ is also cyclic.  
 
  Then:

\begin{center}
$G=\left\langle a,b \mid a^{m}=1, b^{n}=a^{s}, bab^{-1}=a^{i} \right\rangle$
\end{center}
 and the integers \textit{m,n,s, i} are such that

\begin{center}
$s \mid m, \hspace*{0.5cm} m \mid s(i-1),\hspace*{0.5cm} i<m, \hspace*{0.5cm} {\rm{gcd}}(i,m)=1.$
\end{center}

In the special case when $i=-1$ and $n=2$ we obtain the well-known family of {\em Dihedral groups}.

\vr

A \textit{group code} over a field $\F$ is any ideal $I$  of the group algebra $\F G$ of a finite   group $G$. A code is said to be metacyclic,   abelian, or dihedral in case the given group $G$ is of that kind of groups.

If $I$ is two-sided, then it is called a \textit{central} code. A  \textit{minimal central  code} is an ideal $I$ which is minimal in the set of all (two-sided) ideals of $\F G$. 

The \textit{Hamming distance} between two elements  $\alpha= \sum_{g \in G}\alpha_{g}g$ and $\beta= \sum_{g \in G}\beta_{g}g$ in $\F G$ is  

\begin{center}
$d(\alpha, \beta)=\mid \{g \mid \alpha_{g} \neq \beta_{g}, \,\, g \in G\} \mid$
\end{center}

\noindent that is, the number of elements of the support or $\alpha - \beta$. The \textit{weight}  of an ideal $I$ is the number 

\begin{center}
$w(I)={\rm{min}}\{d(\alpha, \beta) \mid \alpha\neq \beta, \,\, \alpha, \beta \in I \}$.
\end{center}


The following result was proved in {\rm{\cite[Proposition 2.1]{FRP}}} when $H \triangleleft G$  but, actually, the proof does not require normality.

\begin{lem} \label{1.4}
Let G be a finite group and let $\F$ a field. Let $H, K$ be subgroups of G with $H\subset K$. Write $e=\widehat{H}-\widehat{K}$. Then: 

\begin{enumerate}
	\item dim$_{\F}(\F G)e=(G:H)-(G:K)$;
	\item $w((\F G)e)=2|H|$;
  \item If ${\mathcal{A}}$ is a transversal of $K$ in $G$ and $\tau$ is a transversal of $H$ in $K$ containing $1$, then the set

\begin{center}
$\{r(1-t)\widehat{H} \mid r \in {\mathcal{A}}, t \in \tau\setminus \{1\} \}$
\end{center}
is a basis of $(\F G)e$ over $\F$.
\end{enumerate}

\end{lem}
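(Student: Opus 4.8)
The plan is to first record the elementary algebraic facts about $\widehat{H}$ and $\widehat{K}$ (where $\widehat{H}=\frac{1}{|H|}\sum_{h\in H}h$, which makes sense and is idempotent since $\mathrm{char}(\F)\nmid|G|$) that drive all three parts. Because $H\subseteq K$ one has $h\widehat{H}=\widehat{H}$ and $k\widehat{K}=\widehat{K}$ for $h\in H$, $k\in K$, whence a short computation gives $\widehat{H}\widehat{K}=\widehat{K}\widehat{H}=\widehat{K}$. Consequently $e=\widehat{H}-\widehat{K}$ satisfies $e^{2}=e$, $e\widehat{K}=\widehat{K}e=0$ and $e+\widehat{K}=\widehat{H}$, so $e$ and $\widehat{K}$ are orthogonal idempotents with sum $\widehat{H}$. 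I will also use repeatedly that every element of the right ideal $(\F G)\widehat{H}$ is fixed by right multiplication by each $h\in H$, so its coefficient function is constant on right cosets of $H$; in particular its support is a union of right cosets of $H$. Finally I will invoke the standard basis fact that if $\mathcal{T}$ is a transversal of $H$ in $G$ then $\{t\widehat{H}:t\in\mathcal{T}\}$ is an $\F$-basis of $(\F G)\widehat{H}$ (the supports, distinct cosets, are disjoint), so $\dim_{\F}(\F G)\widehat{H}=(G:H)$, and likewise $\dim_{\F}(\F G)\widehat{K}=(G:K)$. Throughout I assume $H\neq K$ (otherwise $e=0$ and there is nothing to prove).

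For (1): since $e$ and $\widehat{K}$ are orthogonal idempotents summing to $\widehat{H}$, one has the internal direct sum of right ideals $(\F G)\widehat{H}=(\F G)e\oplus(\F G)\widehat{K}$; taking dimensions gives $\dim_{\F}(\F G)e=(G:H)-(G:K)$.

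For (3): first I would check each listed vector lies in $(\F G)e$. Since $t\in\tau\subseteq K$ we have $(1-t)\widehat{K}=\widehat{K}-\widehat{K}=0$, hence $r(1-t)\widehat{H}\,e=r(1-t)(\widehat{H}-\widehat{K})=r(1-t)\widehat{H}$, so indeed $r(1-t)\widehat{H}\in(\F G)e$. The displayed set has $|\mathcal{A}|\,(|\tau|-1)=(G:K)\big((K:H)-1\big)=(G:H)-(G:K)=\dim_{\F}(\F G)e$ elements, so by part (1) it suffices to prove linear independence. For this I would use the block decomposition $(\F G)\widehat{H}=\bigoplus_{r\in\mathcal{A}}r(\F K)\widehat{H}$, which follows because $\{rt\widehat{H}:r\in\mathcal{A},\,t\in\tau\}$ is a basis of $(\F G)\widehat{H}$ (as $\{rt\}$ is a transversal of $H$ in $G$) and $\{t\widehat{H}:t\in\tau\}$ is a basis of $(\F K)\widehat{H}$. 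Each vector $r(1-t)\widehat{H}=r\widehat{H}-rt\widehat{H}$ lies in the summand $r(\F K)\widehat{H}$, so independence of the whole family reduces, for each fixed $r$, to independence of $\{r\widehat{H}-rt\widehat{H}:t\in\tau\setminus\{1\}\}$ inside $r(\F K)\widehat{H}$, which is immediate since these are differences of the distinct basis vectors $rt\widehat{H}$ with the $t=1$ vector held fixed. Hence the family is a basis of $(\F G)e$.

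For (2): for the upper bound, choose $t\in K\setminus H$ (possible since $H$ is proper in $K$); then $(1-t)\widehat{H}=\widehat{H}-t\widehat{H}\in(\F G)e$ is nonzero with support the disjoint union $H\cup tH$, hence of weight $2|H|$. For the lower bound, let $0\neq x\in(\F G)e\subseteq(\F G)\widehat{H}$; its support is a union of right cosets of $H$, so it has size a positive multiple of $|H|$. If the support were a single coset $gH$, then $x$ is constant on $gH$ and zero elsewhere, i.e. $x=c\,g\widehat{H}$ with $c\in\F^{\times}$; but then $x=xe=c\,g(\widehat{H}-\widehat{K})=c\,g\widehat{H}-c\,g\widehat{K}$ forces $c\,g\widehat{K}=0$, contradicting $c\neq 0$ and $\widehat{K}\neq 0$. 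So the support meets at least two cosets of $H$, giving $w((\F G)e)\geq 2|H|$, and with the upper bound $w((\F G)e)=2|H|$. The step I expect to be the real crux is this lower bound: one must rule out codewords supported on a single $H$-coset, and the clean way is to note such a codeword is a scalar multiple of $g\widehat{H}$ and then feed this back into the identity $x=xe$; everything else is bookkeeping with cosets and orthogonal idempotents.
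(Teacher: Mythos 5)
Your proof is correct, and it takes essentially the standard route that \cite{FRP} (which this paper only cites, without reproducing the argument) uses: observe that $e$ and $\widehat{K}$ are orthogonal idempotents summing to $\widehat{H}$, deduce the direct-sum decomposition $(\F G)\widehat{H}=(\F G)e\oplus(\F G)\widehat{K}$ for (1), use the fact that elements of $(\F G)\widehat{H}$ have support a union of cosets of $H$ plus the exclusion of single-coset codewords for (2), and combine the dimension count with the block-wise independence of the $r(1-t)\widehat{H}$ for (3). The only blemish is terminological: $(\F G)\widehat{H}$ is a left ideal and the cosets $gH$ on which its elements are constant are left cosets, but this has no bearing on the validity of the argument.
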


Sabin and Lomonaco showed in \cite{SL} that central metacyclic codes are not better than cyclic codes, in a sense that, for each central metacyclic code, there exists a cyclic code that is equivalent to it. We shall show that, in the particular case of Dihedral codes, there exist non central codes that are not equivalent to any abelian code and, in the last section, we exibit examples of Dihedral codes that have weights  equal to  the weights of the best known codes of the same dimension.

\section{Dihedral Codes of Length 2$p^{m}$}

 Throughout this section, $D$ will denote the dihedral group of order 2$p^{m}$ with presentation

\begin{center}
$D=\left\langle a,b \mid a^{p^{m}}=1=b^{2}, \,\, bab=a^{-1}\right\rangle$.
\end{center}

\noindent and $\F_{q}$ will denote a finite field with $q$ elements such that $gcd(2p^{m}, q)=1$. Also, in what follows we shall always assume that ${\mathcal{U}}(\Z_{p^{m}})=\left\langle \overline{q}\right\rangle$; i.e. that the multiplicative order of $q$ modulo $p^n$ is $\varphi(p^n) = p^{n-1}(p-1)$.

Under this hypothesis, it has been shown in \cite{RP} that, setting   $A=\left\langle a\right\rangle$, the structure of the cyclic group algebra  $\F A$ depends only on the subgroup structure of $A$. As an application, one can describe the set of primitive idempotents of  $\F D$ as follows. Let

\begin{center}
$A=H_{0}\supseteq H_{1}\supseteq \cdots \supseteq H_{m}=\{1\}$ 
\end{center}
 be the descending chain of all subgroups of $A$, i.e., $H_{j}=\langle a^{p^{j}}, \rangle \ 0\leq i \leq m$. Consider the idempotents 

\begin{center}
$e_{0}=\widehat{A} \hspace*{0.5cm} e \hspace*{0.5cm}  e_{j}=\widehat{H_{j}}-\widehat{H_{j-1}},  \hspace*{0.2cm} 1\leq j\leq m$
\end{center} 
and write $e_{0}=\widehat{\langle a \rangle}$ as the following sum of idempotents

\begin{center}
 $e_{11}=\left(\frac{1+b}{2}\right)e_{0} \hspace*{0.5cm} e \hspace*{0.5cm} e_{22}=\left(\frac{1-b}{2}\right)e_{0}$.
\end{center}

\begin{thrm} \label{1.6} {\rm{\cite[Theorem 3.3]{FRP}}}
Under the hypotheses above, the set of primitive central idempotents of $\F_{q}D$ is 

\begin{center}
$\{e_{11}, e_{22}\}\cup \{e_{j}, \,\, 1\leq j\leq m \}.$ 
\end{center}

\end{thrm}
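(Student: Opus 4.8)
The plan is to lift the known decomposition of $\F_{q}A$ to $\F_{q}D=\F_{q}A\oplus\F_{q}Ab$, tracking how conjugation by $b$ acts on the simple components of $\F_{q}A$. Since $\overline{q}$ generates $\mathcal{U}(\Z_{p^{m}})$, for each $j$ the elements of $\Z_{p^{m}}$ of order exactly $p^{j}$ form a single $q$-cyclotomic coset, so $\F_{q}A$ has exactly $m+1$ simple components; explicitly, $\{e_{0}=\widehat{A}\}\cup\{e_{j}=\widehat{H_{j}}-\widehat{H_{j-1}}\mid 1\le j\le m\}$ is the complete set of primitive idempotents of $\F_{q}A$, with $\F_{q}Ae_{0}\cong\F_{q}$ and $\F_{q}Ae_{j}\cong\F_{q}(\zeta)$ for a primitive $p^{j}$-th root of unity $\zeta$ when $j\ge 1$ (this is the description of $\F_{q}A$ obtained in \cite{RP}). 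Writing $e_{0}=e_{11}+e_{22}$ and using $b^{2}=1$, one checks at once that $1=e_{11}+e_{22}+\sum_{j=1}^{m}e_{j}$ is a decomposition of $1$ into nonzero, pairwise orthogonal idempotents of $\F_{q}D$; the only products that need checking are $e_{11}e_{22}=\tfrac14(1-b^{2})e_{0}=0$ and $e_{11}e_{j}=\tfrac12(1+b)e_{0}e_{j}=0$ (and likewise $e_{22}e_{j}=0$) for $1\le j\le m$.

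Next I would show that every idempotent in this list is central in $\F_{q}D$. Since $bab^{-1}=a^{-1}$, we have $ba^{p^{j}}b^{-1}=a^{-p^{j}}\in H_{j}$, so each $H_{j}$ is normal in $D$ and hence $b\widehat{H_{j}}=\widehat{H_{j}}b$; therefore each $e_{j}$ with $0\le j\le m$ commutes with $b$ and, lying in the commutative algebra $\F_{q}A$, commutes with all of $\F_{q}A$, so $e_{j}\in Z(\F_{q}D)$. In particular $e_{0}$ is central, so $\F_{q}De_{0}$ is a two-sided ideal which is a ring with identity $e_{0}$; since $e_{11}$ and $e_{22}$ lie in the commutative ring $\F_{q}De_{0}$, they belong to $Z(\F_{q}De_{0})\subseteq Z(\F_{q}D)$.

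It remains to show that each component $\F_{q}De$ is simple. For $e_{0}$: the space $\F_{q}D\widehat{A}$ is spanned by $\widehat{A}$ and $b\widehat{A}$, with $(b\widehat{A})^{2}=\widehat{A}$, so $\F_{q}De_{0}\cong\F_{q}[x]/(x^{2}-1)\cong\F_{q}\times\F_{q}$ because $q$ is odd, and its two primitive idempotents are precisely $e_{11}$ and $e_{22}$; thus $\F_{q}De_{11}\cong\F_{q}\cong\F_{q}De_{22}$ are fields. For $1\le j\le m$, put $K_{j}=\F_{q}Ae_{j}$; then $\F_{q}De_{j}=K_{j}\oplus K_{j}(be_{j})$, where $(be_{j})^{2}=e_{j}$ and $be_{j}$ acts on $K_{j}$ by the automorphism $\sigma_{j}$ determined by $ae_{j}\mapsto a^{-1}e_{j}$, that is, $\zeta\mapsto\zeta^{-1}$; since $p$ is odd this automorphism is nontrivial of order $2$. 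A short computation shows that $Z(\F_{q}De_{j})$ equals the fixed field $F_{j}:=K_{j}^{\sigma_{j}}$, and $[K_{j}:F_{j}]=2$. As $\gcd(|D|,q)=1$, $\F_{q}D$ is semisimple, hence so is its ring direct factor $\F_{q}De_{j}$; by the Artin--Wedderburn theorem together with Wedderburn's little theorem it is a product of matrix algebras over finite fields, and since its center $F_{j}$ is a field there is only one factor, $\F_{q}De_{j}\cong M_{n}(F_{j})$. Comparing $\F_{q}$-dimensions gives $n^{2}[F_{j}:\F_{q}]=2[K_{j}:\F_{q}]=4[F_{j}:\F_{q}]$, so $n=2$ and $\F_{q}De_{j}\cong M_{2}(F_{j})$ is simple; hence $e_{j}$ is a primitive central idempotent.

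Putting everything together, $\{e_{11},e_{22}\}\cup\{e_{j}\mid 1\le j\le m\}$ is a family of pairwise orthogonal primitive central idempotents of $\F_{q}D$ whose sum is $1$, so it is exactly the set of primitive central idempotents of $\F_{q}D$. The step I expect to be the real obstacle is the third paragraph: one has to pin down the action of $b$ on each $K_{j}=\F_{q}Ae_{j}$ as the nontrivial Galois automorphism $\zeta\mapsto\zeta^{-1}$, and show that the resulting Wedderburn component does not split further. This is precisely where the hypothesis that $q$ has full multiplicative order modulo $p^{m}$ and the fact that $p$ is odd (so that $\zeta\ne\zeta^{-1}$) are genuinely used; the rest is routine manipulation of idempotents.
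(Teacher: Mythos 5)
Your argument is correct and complete: lifting the primitive idempotents of $\F_{q}A$ (which under the hypothesis on the order of $q$ are exactly $e_{0},e_{1},\dots,e_{m}$), checking centrality via normality of the $H_{j}$, splitting $\F_{q}De_{0}\cong\F_{q}\times\F_{q}$, and identifying $\F_{q}De_{j}$ as a crossed product of $K_{j}=\F_{q}Ae_{j}$ with the order-two automorphism $\zeta\mapsto\zeta^{-1}$, whence $M_{2}(F_{j})$ by the dimension count. The paper itself gives no proof (the result is quoted from \cite{FRP}), and your derivation is the standard one used there, so there is nothing to object to.
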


  Sabin and Lomonaco, \cite{SL} introduced the following.

\begin{definition}
Let $G_{1}$ and $G_{2}$ be finite groups of the same order and let $\F$ be a field. Let $\F G_{1}$ and $\F G_{2}$ be the corresponding group algebras. A \emph{combinatorial equivalence} is a $\F$ vector space isomorphism $\phi: \, \F G_{1}\longrightarrow \F G_{2}$ induced by a bijection $\phi: \, G_{1}\longrightarrow  G_{2}$. Codes $C_{1} \subset \F G_{1}$ and $C_{2} \subset \F G_{2}$ are \emph{combinatorially equivalent} if there exists a combinatorial equivalence $\phi: \, \F G_{1}\longrightarrow \F G_{2}$ such that $\phi(C_{1})=C_{2}$.
\end{definition}

Clearly, combinatorially equivalences are Hamming isometries and, thus, combinatorially equivalent codes have the same weights and dimensions.

\vr

Let $G$ be a metacyclic group with  presentation

\begin{center}
$G=\left\langle a,b \mid a^{m}=1=b^{n}, \,\ bab^{-1}=a^{i}\right\rangle$.
\end{center}
The result we quote below shows that central metacyclic codes are no better that abelian codes.

\begin{thrm} \label{1.8} {\rm{\cite [Theorem 1]{SL}}}
Let $C_{m}$ and $C_{n}$ be cyclic groups of order $m$ and $n$, with generators $\tilde{a}$ and $\tilde{b}$ respectively and let $\phi: \, G\longrightarrow C_{m}\times C_{n} $ be the bijection given by $\phi(a^{i}b^{j})=\tilde{a}^{i}\tilde{b}^{j}$ .
 If $C$ is a code generated by a central idempotent $e$ in $\F G$, then $\phi(C)$ is the  ideal of $\F [C_{m}\times C_{n}]$generated by  $\phi(e)$, an idempotent of $\F [C_{m}\times C_{n} ]$. Consequently, every central metacyclic code is combinatorially equivalent to an abelian code.
\end{thrm}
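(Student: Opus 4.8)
The plan is to use the $C_{n}$-grading that the normal subgroup $A=\langle a\rangle$ induces on $\F G$. Since $A\triangleleft G$ we have a direct sum decomposition $\F G=\bigoplus_{l=0}^{n-1}\F A\,b^{l}$ of $\F$-spaces, so the given central idempotent can be written uniquely as $e=\sum_{l=0}^{n-1}\gamma_{l}b^{l}$ with $\gamma_{l}\in\F A$. The decisive step — and the one I expect to be the crux — is to show that centrality of $e$ forces each $\gamma_{l}$ to be fixed by the twisting automorphism $\sigma$ of $\F A$ determined by $\sigma(a)=a^{i}$. Indeed $b\gamma_{l}b^{-1}=\sigma(\gamma_{l})$ and $b\,b^{l}\,b^{-1}=b^{l}$, so $beb^{-1}=\sum_{l}\sigma(\gamma_{l})b^{l}$; comparing this with $e$ and using the directness of the decomposition above gives $\sigma(\gamma_{l})=\gamma_{l}$, hence $\sigma^{j}(\gamma_{l})=\gamma_{l}$ for every $j$. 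Morally this says that, although $G$ is nonabelian, the homogeneous pieces of $\F G$ that a central element occupies behave commutatively.

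With this in hand I would prove the single identity that carries the whole argument: for every $g=a^{s}b^{t}\in G$,
\[
\phi(g\,e)=\phi(g)\,\phi(e)\qquad\text{in }\F[C_{m}\times C_{n}].
\]
To see it, compute $g e=a^{s}b^{t}\sum_{l}\gamma_{l}b^{l}=a^{s}\sum_{l}\sigma^{t}(\gamma_{l})b^{t+l}=\sum_{l}\gamma_{l}\,a^{s}b^{t+l}$, where the last two equalities use $\sigma^{t}(\gamma_{l})=\gamma_{l}$ and the commutativity of $\F A$. Applying $\phi$ term by term and observing that, for $\gamma=\sum_{k}c_{k}a^{k}\in\F A$, one has $\phi(\gamma\,a^{s}b^{t+l})=\tilde a^{s}\tilde b^{t}\bigl(\sum_{k}c_{k}\tilde a^{k}\bigr)\tilde b^{l}=\tilde a^{s}\tilde b^{t}\,\phi(\gamma b^{l})$, summation over $l$ yields $\phi(ge)=\tilde a^{s}\tilde b^{t}\,\phi(e)=\phi(g)\phi(e)$.

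Everything else is then formal. Writing $e=\sum_{g}e_{g}g$ and using $e^{2}=e$, linearity of $\phi$ gives $\phi(e)^{2}=\sum_{g}e_{g}\,\phi(g)\phi(e)=\phi\bigl(\sum_{g}e_{g}(ge)\bigr)=\phi(e^{2})=\phi(e)$, so $\phi(e)$ is an idempotent of $\F[C_{m}\times C_{n}]$. Similarly $\phi(C)=\phi(\F G e)=\mathrm{span}_{\F}\{\phi(ge):g\in G\}=\mathrm{span}_{\F}\{\phi(g)\phi(e):g\in G\}=\F[C_{m}\times C_{n}]\,\phi(e)$, since $\{\phi(g):g\in G\}$ is an $\F$-basis of $\F[C_{m}\times C_{n}]$; thus $\phi(C)$ is the ideal generated by the idempotent $\phi(e)$, i.e.\ an abelian code. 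As $\phi$ is by construction a combinatorial equivalence, $C$ and $\phi(C)$ are combinatorially equivalent, which is the final assertion. The one point requiring real care is the invariance $\sigma(\gamma_{l})=\gamma_{l}$; it is also here that the split relation $b^{n}=1$ matters, since it is what makes $\phi$ a bijection respecting the product structure used above — in the non-split case the target would be a twisted group algebra rather than $\F[C_{m}\times C_{n}]$.
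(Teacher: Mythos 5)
Your proof is correct and complete. The paper itself gives no argument for this theorem --- it is quoted from Sabin and Lomonaco without proof --- and what you write is exactly the standard route: decompose $\F G=\bigoplus_{l}\F A\,b^{l}$, use centrality ($beb^{-1}=e$) to force $\sigma(\gamma_{l})=\gamma_{l}$, deduce $\phi(ge)=\phi(g)\phi(e)$, and then get idempotency of $\phi(e)$ and $\phi(\F G e)=\F[C_{m}\times C_{n}]\phi(e)$ formally; your closing remark correctly identifies $b^{n}=1$ as the hypothesis making the exponent bookkeeping in $\phi$ consistent.
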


 We shall construct some non-central ideals in a rather simple way and prove that these are not equivalent to any abelian code.

Recall, from Theorem \ref{1.6}, that  the set of primitive central idempotents of $\F D$ is

\begin{center}
$\{e_{11}, e_{22}\}\cup \{e_{j}, \,\, 1\leq j\leq m \}.$ 
\end{center}

Let us denote by $e$, for short,  one  idempotent chosen from the set $\{e_{j}, \,\, 1\leq j\leq m \}$ which we fix for the rest of this seection. It was shown in \cite[Proposition 2.1]{JLP} that $(a-a^{-1})e$ is invertible in $\F D$ and that the elements

\newpage
$$ e_{11} = \left(\frac{1+b}{2}\right) e, \hspace{2.5cm} \ \  \ \  e_{12} = \left(\frac{1+b}{2}\right)a\left(\frac{1-b}{2}\right) e,   $$
$$e_{21} = 4((a-a^{-1})e)^{-2}\left(\frac{1-b}{2}\right) a\left(\frac{1+b}{2}\right)e, \ \  e_{22}= \left(\frac{1-b}{2}\right)e. $$
 form a set of matrix units for $(\F D)e$; i.e., they verify the identities $e_{11} + e_{22} = e$ and $e_{ij}e_{hk} = \delta_{jh}e_{1k}$. Hence, by \cite[Lemma VI.3.11]{sehgal}, $(\F D)e$ is the full ring of $2\times 2$ matrices over the centralizer of $e_{12}$ in $(\F D)e$ and thus, $e_{11}$ and $e_{22}$ are primitive (non-central) idempotents.

Notice that, setting $H^*_j = \langle b \rangle . H_j, \ 1\leq j \leq m,$ we have that \linebreak  $e_{11} = \widehat{H^*_j}-\widehat{H^*_{j-1}}$ so Lemma~\ref{1.4} readily gives  
$$dim[ (\F G)e_{11}] = \varphi(p^j) \ \ \mbox{ and } \ \ w[(\F G)e_{11}] = 2|H_j| = 4p^{m-j}.$$

It is not difficult to show that $(\F G)e_{22}$ has the same dimension and weight.

 It turns out that these particular idempotents generate codes $I_{11} = (\F D)e_{11}$ and $I_{22}=(\F D)e_{22}$ that are  equivalent to cyclic codes. However, it will be easy to use them, under certain hypotheses, to produce better codes. 
 
 To see this,  consider first   a  cyclic group $G$ of order $2p^{m}$, which we write as  $G= \tilde{A}\times C$, where $\tilde{A}$ is the cyclic subgroup of $G$ of order $p^{m}$ and $C=\{1,t\}$  its  subgroup of order $2$. Denote by 
$$\tilde{A}=K_{0}\supseteq K_{1}\supseteq \cdots \supseteq K_{m}=\{1\}$$ 
 the descending chain of   subgroups of $\tilde{A}$ and, as before, write $\tilde{e}_j = \widehat{K_j} = \widehat{K_{j-1}}$. 
 
\begin{prop} \label{1.13} With the notations above, the map

\begin{center}
$\gamma: \, D\longrightarrow G=A\times C_{2}  $\\
$\gamma(a^{i}b^{j})\longmapsto\tilde{a}^{i}t^{j}$
\end{center}
extended linealy to $\ D$ is such that $\gamma(e_{11}) = \left(\frac{1+t}{2}\right)\tilde{e}_j$ and $\gamma(e_{22}) = \left(\frac{1-t}{2}\right)\tilde{e}_j$ so

$$ \gamma(I_{j_{1}})=\F_{q}G \left(\frac{1+t}{2}\right)\tilde{e}_j  \ \ \mbox{ and } \ \ \gamma(I_{j_{2}})=\F_{q}G  \left(\frac{1-t}{2}\right)\tilde{e}_j.$$  

Consequently, $I_{11}$ and $I_{22}$ are combinatorially equivalent to cyclic codes.
 \end{prop}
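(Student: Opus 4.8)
The plan is as follows. Both $D$ and $G=\tilde{A}\times C$ have order $2p^m$ and $\gamma$ is manifestly a bijection on group elements, so its linear extension is a combinatorial equivalence $\F_q D\to\F_q G$ and, in particular, preserves dimensions and Hamming weights; hence once $\gamma(I_{11})$ and $\gamma(I_{22})$ are identified with ideals of $\F_q G$ the last assertion follows at once. All the work lies in those identifications, and the structural fact that makes them routine is that $b$ \emph{commutes} with $e=e_j=\widehat{H_j}-\widehat{H_{j-1}}$: since $b\,a^{p^j}b^{-1}=a^{-p^j}$, $b$ normalizes $H_j=\langle a^{p^j}\rangle$, so $b\widehat{H_j}=\widehat{H_j}b$ and therefore $be=eb$.

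First I would record two consequences of $be=eb$. Viewing $e\in\F_q A$, we get $e_{11}=\tfrac12 e(1+b)=\tfrac12(e+eb)$ and $e_{22}=\tfrac12 e(1-b)=\tfrac12(e-eb)$; moreover $be_{11}=e_{11}$ and $be_{22}=-e_{22}$. Using $\F_q D=\F_q A\oplus\F_q A\,b$, the latter collapses the ideals to modules over the commutative ring $\F_q A$:
\[ I_{11}=(\F_q D)e_{11}=(\F_q A+\F_q A\,b)e_{11}=(\F_q A)e_{11}=\{\tfrac12(y+yb)\mid y\in(\F_q A)e\}, \]
and similarly $I_{22}=(\F_q A)e_{22}=\{\tfrac12(y-yb)\mid y\in(\F_q A)e\}$. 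This is the key reduction: although $\gamma$ is only linear (not multiplicative), every element of $I_{11}$ is now a short explicit combination of an element of $\F_q A$ and an element of $\F_q A\,b$, on each of which $\gamma$ acts transparently.

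Next I would use that the restriction $\gamma_0$ of $\gamma$ to $\F_q A$ is the \emph{algebra} isomorphism $\F_q A\to\F_q\tilde{A}$ induced by $a\mapsto\tilde{a}$ (both groups cyclic of order $p^m$), and that $\gamma(yb)=\gamma_0(y)t$ for all $y\in\F_q A$. Since $\gamma_0$ maps $H_j=\langle a^{p^j}\rangle$ onto $K_j=\langle\tilde{a}^{p^j}\rangle$, it sends $e=\widehat{H_j}-\widehat{H_{j-1}}$ to $\tilde{e}_j=\widehat{K_j}-\widehat{K_{j-1}}$ and $(\F_q A)e$ to $(\F_q\tilde{A})\tilde{e}_j$. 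Applying $\gamma$ term by term then gives
\[ \gamma(e_{11})=\tfrac12\bigl(\gamma_0(e)+\gamma_0(e)t\bigr)=\tfrac{1+t}{2}\tilde{e}_j,\qquad \gamma(e_{22})=\tfrac12\bigl(\gamma_0(e)-\gamma_0(e)t\bigr)=\tfrac{1-t}{2}\tilde{e}_j, \]
and, from the explicit descriptions above, $\gamma(I_{11})=(\F_q\tilde{A})\tilde{e}_j\cdot\tfrac{1+t}{2}$ and $\gamma(I_{22})=(\F_q\tilde{A})\tilde{e}_j\cdot\tfrac{1-t}{2}$. Finally I would check that these equal the ideals in the statement: since $t\cdot\tfrac{1\pm t}{2}=\pm\tfrac{1\pm t}{2}$ in $\F_q G$ and $\F_q G=\F_q\tilde{A}+\F_q\tilde{A}\,t$, one has $\F_q G\cdot\tfrac{1\pm t}{2}\tilde{e}_j=\F_q\tilde{A}\cdot\tfrac{1\pm t}{2}\tilde{e}_j=(\F_q\tilde{A})\tilde{e}_j\cdot\tfrac{1\pm t}{2}$. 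Thus $\gamma(I_{11})$ and $\gamma(I_{22})$ are ideals of the abelian (in fact cyclic) group algebra $\F_q G$, and pulling them back along $\gamma^{-1}$ exhibits $I_{11}$ and $I_{22}$ as combinatorially equivalent to cyclic codes.

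The only genuinely delicate point is the step from ``$\gamma$ is a linear isometry'' to ``$\gamma$ carries the left ideal $(\F_q D)e_{11}$ onto a left ideal of $\F_q G$'': this would fail for a generic linear isometry and is rescued here solely by the observation that $b$ acts on $e_{11}$ (resp. $e_{22}$) as the scalar $+1$ (resp. $-1$), which permits the reduction from the $\F_q D$-module $(\F_q D)e_{11}$ to the $\F_q A$-module $(\F_q A)e_{11}$. As a consistency check, Lemma~\ref{1.4} applied to $\langle t\rangle K_j\supseteq\langle t\rangle K_{j-1}$ in $G$ gives $\dim\bigl(\F_q G\cdot\tfrac{1+t}{2}\tilde{e}_j\bigr)=p^j-p^{j-1}=\varphi(p^j)$, which matches the dimension $\varphi(p^j)$ of $I_{11}$ recorded just before the proposition.
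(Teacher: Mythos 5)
Your proof is correct and follows essentially the same route as the paper: both arguments hinge on the identities $b\left(\frac{1\pm b}{2}\right)=\pm\left(\frac{1\pm b}{2}\right)$ (equivalently $be_{11}=e_{11}$, $be_{22}=-e_{22}$) together with the fact that $\gamma$ carries $\widehat{H_i}$ to $\widehat{K_i}$ and $a^h b\,\widehat{H_i}$ to $\tilde a^h t\,\widehat{K_i}$, so that $\gamma(ge_{11})=\gamma(g)\left(\frac{1+t}{2}\right)\tilde e_j$ for all $g\in D$. Your reorganization via the collapse $(\F_q D)e_{11}=(\F_q A)e_{11}$ and the algebra isomorphism $\F_q A\to\F_q\tilde A$ is a slightly tidier packaging of the same computation.
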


\begin{proof} It follows directly, from the definition of $\gamma$ that  $\gamma(\widehat{H_i})= \widehat{K_i}$ and $\gamma(\widehat b{H_i})= \widehat t{K_i}$. We claim that
$$\gamma(g e_{11}) = \gamma(g)  \left(\frac{1+t}{2}\right)\tilde{e}_j, \ \ \mbox{ for all } g \in D.$$

In fact, $g$ is of the form $g = a^hb$, for some positive integer $h$ and, since $b(1+b)/2 = (1+b)/2$, we have 

\begin{eqnarray*} \gamma(g e_{11}) &  = & \gamma\left(a^hb\left(\frac{1+b}{2}\right) e\right) = \gamma\left( \left(\frac{a^h+a^hb}{2}\right)(\widehat{H_j} - \widehat{H_{j-1}})\right) \\
               &  = & \left(\frac{\tilde{a}^h+\tilde{a}^ht}{2}\right)(\widehat{K_j} - \widehat{K_{j-1}})  = \tilde{a}^h\tilde{b} \;\tilde{e}_j = \gamma(g)\tilde{e}. 
               \end{eqnarray*}

Also, since $b(1-b)/2 = -(1-b)/2$, in a similar way we obtain that $\gamma(ge_{22}) = \gamma(g)\tilde{e}_{22}$. 

Since $\gamma$ is linear, the   results   follow.
 \end{proof}

Set $\alpha =e _{11}+e _{12}+e_{22}$. Then, as $\alpha (e_{11}-e_{12}+e_{22}) = e$ it follows that $\alpha$is invertible in the component $(\F G)e$ with $\alpha^{-1} =e_{11}-e_{12}+e_{22}$. Then $\alpha e_{11} \alpha ^{-1} =e_{11}-e_{12}$, a non central idempotent.

  Since   conjugation is an $\F$-automorphism of $(\F D)e$, the dimension of the left ideal $I = \F_{q}D(e_{11}-e_{12})$ is also equal to $\varphi(p^{j})$.

\begin{prop} \label{NB}
 Write $f=e_{11}-e_{12}$. Then the set 

\begin{center}
$ \mathcal{B}=\{f, af, a^{2}f, \cdots, a^{\varphi(p^{j})-1}f\}$
\end{center}
is a basis for $I$ over $\F_{q}$.

\end{prop}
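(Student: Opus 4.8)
The plan is to identify $I$ with a cyclic module over a polynomial ring and then invoke the dimension $\dim_{\F_q}I=\varphi(p^{j})$ already established just before the statement. The one step that carries any content is the observation that $bf=f$. Indeed, both $e_{11}=\left(\frac{1+b}{2}\right)e$ and $e_{12}=\left(\frac{1+b}{2}\right)a\left(\frac{1-b}{2}\right)e$ lie in $\left(\frac{1+b}{2}\right)\F_q D$, hence so does $f$; since $\left(\frac{1+b}{2}\right)$ is idempotent we get $\left(\frac{1+b}{2}\right)f=f$, and since $b\left(\frac{1+b}{2}\right)=\left(\frac{1+b}{2}\right)$ we conclude $bf=b\left(\frac{1+b}{2}\right)f=\left(\frac{1+b}{2}\right)f=f$. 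Writing $A=\langle a\rangle$ and decomposing $\F_q D=\F_q A\oplus(\F_q A)b$, it follows that $(a^{i}b)f=a^{i}f$ for every $i$, so
$$ I=\F_q D f=(\F_q A)f+(\F_q A)bf=(\F_q A)f . $$
In particular $I$ is spanned over $\F_q$ by $\{a^{i}f:0\le i<p^{m}\}$ and is a cyclic module over the cyclic group algebra $\F_q A$, generated by $f$.

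Next I would identify $\F_q A$ with $\F_q[x]/(x^{p^{m}}-1)$ via $x\mapsto a$, so that $p(x)\mapsto p(a)f$ defines a surjective $\F_q[x]$-module homomorphism $\F_q[x]\to I$. Its kernel is a nonzero ideal of $\F_q[x]$ (it contains $x^{p^{m}}-1$), hence equals $(\nu)$ for a unique monic polynomial $\nu$, and we obtain an $\F_q$-linear isomorphism $\F_q[x]/(\nu)\xrightarrow{\ \sim\ }I$ carrying $x^{i}$ to $a^{i}f$. Since $\{1,x,\dots,x^{\deg\nu-1}\}$ is an $\F_q$-basis of $\F_q[x]/(\nu)$ by Euclidean division, its image $\{f,af,\dots,a^{\deg\nu-1}f\}$ is an $\F_q$-basis of $I$; in particular $\deg\nu=\dim_{\F_q}I=\varphi(p^{j})$. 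Hence this basis has exactly $\varphi(p^{j})$ elements and coincides with $\mathcal{B}$, which proves the proposition.

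I expect the main obstacle to be precisely the first step: recognizing that $f$, although built using $b$, satisfies $bf=f$, so that the ``dihedral part'' $(\F_q A)b$ of $\F_q D$ contributes nothing to $I$ and one is left with the cyclic algebra $\F_q A$ acting cyclically on $f$. After that, the conclusion is routine linear algebra over a polynomial ring together with the dimension count cited above. (One could equally well avoid modules and argue that the minimal polynomial of left multiplication by $a$ on $I$ has degree at most $\dim_{\F_q}I=\varphi(p^{j})$, so $a^{\varphi(p^{j})}f$ — and then, inductively, each $a^{i}f$ — lies in the span of $\mathcal{B}$; a dimension comparison then finishes. This is the same argument in different clothing.)
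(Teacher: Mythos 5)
Your proof is correct, and it takes a genuinely different route from the paper's. The paper computes $f$ explicitly as $\frac{1}{4}\left[(2-a+a^{-1})+(2+a-a^{-1})b\right]$ (times $e$), assumes a vanishing combination $\sum_k\alpha_k a^k f=0$, extracts the component lying in $\F_q\left\langle a\right\rangle$ to get $\left(\sum_k\alpha_k a^k\right)(2-a+a^{-1})e=0$, and then cancels the factor $(2-a+a^{-1})e$ by arguing that it is a nonzero element of the field $\F_q\left\langle a\right\rangle e$ (here the primitivity of $e$ in $\F_q\left\langle a\right\rangle$, guaranteed by the standing hypothesis on the order of $q$, is used); an external reference then forces all $\alpha_k=0$, and the dimension count finishes. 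You instead observe that $bf=f$, so that $I=\F_qDf=(\F_q\left\langle a\right\rangle)f$ is a cyclic module over $\F_q\left\langle a\right\rangle\cong\F_q[x]/(x^{p^m}-1)$, and then transport the standard basis $\{1,x,\dots,x^{\deg\nu-1}\}$ of $\F_q[x]/(\nu)$ to $\mathcal{B}$, reading off $\deg\nu=\dim_{\F_q}I=\varphi(p^j)$ from the dimension established just before the statement. Both arguments lean on that prior dimension count, and your identity $bf=f$ is consistent with (indeed visible in) the paper's explicit formula for $f$. What your version buys is economy and robustness: it needs neither the explicit coordinates of $f$, nor the field property of $\F_q\left\langle a\right\rangle e$, nor the cited (unpublished) result, and it would apply verbatim to any generator $f$ satisfying $bf\in\F_q\left\langle a\right\rangle f$. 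What the paper's computation buys is the explicit expansion of $f$ and $af$ in the group basis, which is exactly what is reused in Section 3 to compute the weight of the example code; so if you adopt your argument you would still want to record that expansion separately.
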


\begin{proof}
  We   first prove that the set $\mathcal{B}$ is linearly indenpendent.

Notice that

\begin{eqnarray*}
f & = & e_{11}-e_{12}=\left(\frac{1+b}{2}\right)e -\left(\frac{1+b}{2}\right)a\left(\frac{1-b}{2}\right)e\\
& = & \left(\frac{1+b}{2}\right)\left(1-a\left(\frac{1-b}{2}\right)\right)e = \left(\frac{1+b}{2}\right)\left(\frac{2-a+ab}{2}\right)e\\
& = & \frac{1}{4}\left[(2-a+a^{-1})+(2+a-a^{-1})b\right].
\end{eqnarray*}

Assume that 

$$\alpha_{0}f+\alpha_{1}af+ \ldots + \alpha_{\varphi(p^{j})-1}a^{\varphi{(p^{j})}-1}f=0.$$

Then

$$\left[\left(\displaystyle\sum_{k=0}^{\varphi(p^{j})-1}\alpha_{k}a^{k}\right)(2-a+a^{-1})+ \left(\displaystyle\sum_{k=0}^{\varphi(p^{j})-1}\alpha_{k}a^{k}\right)(2+a-a^{-1})b\right]e=0.$$
and thus, in particular, 

$$\left[\left(\displaystyle\sum_{k=0}^{\varphi(p^{j})-1}\alpha_{k}a^{k}\right)(2-a+a^{-1})\right]e=0,$$
 $\F_{q}\left\langle a\right\rangle$. This implies that

$$ \left( \sum_{k=0}^{\varphi(p^{j})-1}\alpha_{k}a^{k}\right)(2-a+a^{-1})e  
=  \left[\left(\sum_{k=0}^{\varphi(p^{j})-1}\alpha_{k}a^{k}\right)e\right]\left[(2-a+a^{-1})e\right]=0.$$

Since $e $ is a primitive idempotent of $\F_{q}\left\langle a\right\rangle$, the ideal $\F_{q}\left\langle a\right\rangle e  $ is a field. 

If $j\geq 2$, the elements 2$e$, $ae$ and $a^{-1}e$ have disjoint support so the element $(2-a+a^{-1})e$ cannot be zero. 

If $j=1$, then   $(2-a+a^{-1})e_{1}= 2\widehat{H_{1}}-a\widehat{H_{1}}+a^{-1}\widehat{H_{1}}-\widehat{H_{0}}$. If $(2-a+a^{-1})e_{1}=0$ so $2\widehat{H_{1}}-a\widehat{H_{1}}+a^{-1}\widehat{H_{1}}=\widehat{H_{0}}$ a contradiction. 

So $\displaystyle\sum_{k=0}^{\varphi(p^{j})-1}\alpha_{k}a^{k}e_{j}=0$ and, by {\rm{\cite[Theorem 4.9]{AP}}}, $\alpha_{k}=0$, for all $k$.

Since the number of elements of $\mathcal{B}$ is $\varphi(p^{j})$ and the dimension of $I$ is also $\varphi(p^{j})$, the result follows.   
\end{proof}

\section{An Example}

\vr

 Let $D_{9}$ be dihedral group of order 18, set $e = e_1 = H_1-h_0$, $f = e_{11}-e_{22}$ and let us consider the ideal $I = \F_{q}D_{9}f$.

 By Proposition \ref{NB}, the set $\{f, af\}$ is a basis of this ideal. We have that

 $$ae^{(1)}   =   (2a-a^{2}+1)e_{1}+(2a+a^{2}-1)e_{1}b $$
 where
 \begin{eqnarray*}
(2-a+a^{-1})(\widehat{H_{1}}-\widehat{H_{0}}) & = & 2\widehat{H_{1}}-a\widehat{H_{1}}+a^{-1}\widehat{H_{1}}-2\widehat{H_{0}}\\ a(2-a+a^{-1})(\widehat{H_{1}}-\widehat{H_{0}}) & = & 2a\widehat{H_{1}}-a^{2}\widehat{H_{1}}+\widehat{H_{1}}-2\widehat{H_{0}},
\end{eqnarray*}
 consequently  we can write an arbitrary element $\alpha$ of $\F_{q}D_{9}f$ as

\begin{eqnarray*} \alpha_{0}f+\alpha_{1}af & = & (2\alpha_{0}+\alpha_{1})\widehat{H_{1}}+(-\alpha_{0}+2\alpha_{1})a\widehat{H_{1}}+(\alpha_{0}-\alpha_{1})a^{2}\widehat{H_{1}}+ \\
  & &  (-2\alpha_{0}-2\alpha_{1})\widehat{H_{0}}+(2\alpha_{0}-\alpha_{1})\widehat{H_{1}}b+(\alpha_{0}+2\alpha_{1})a\widehat{H_{1}}b+\\   & & (-\alpha_{0}+\alpha_{1})a^{2}\widehat{H_{1}}b+(-2\alpha_{0}-2\alpha_{1})\widehat{H_{0}}b.
  \end{eqnarray*}

A direct computation shows that, if the characteristic of $\F_{q}$ is different from 2,3,5 and 7, then at most just one of the coefficients of $\alpha$ can be equal to zero. So, the weight of $\alpha$ is $w(\alpha)\geq 15$. 

As it is easy to exibit elements of $I$ of weight 15, we have:

\begin{enumerate}
 	\item The dimension of $I = \F_{q}D_{9}f$ is $\varphi(3)=2$;
	\item The weight of $ I$ is $w(I)=15$.
\end{enumerate}

We remark that the weight of this code is the same as that of the best known code  of same dimension (see [\textit{www.codetables.de}]), for exemple in the case when the field is $\F_{11}$, which satisfies our conditions.\\

Finally, we shall show that, when the multiplicative order of $q$ modulo $9$ is  $\varphi(9)=6$,  this code is not equivalent to any abelian code. Let $A$ be an abelian group of order 18; then it can be written as $A=C_{2}\times B$, where $C_{2}$ is a cyclic group of order 2 generated by an element $t$, and $B$ is an abelian 3-group.

Supose, by   way of contradiction, that   $I = \F_{q}D_{9}f$ is combinatorially equivalent to a code $J$ of $\F_{q}A$. Since ${\mathcal{U}}(\Z_{p^{r}})=\left\langle \overline{q}\right\rangle$, by \cite[Lemma 5 and Theorem 4.1]{RP}, the primitive idempotents of $\F_{q}A$ are of the form $\left(\frac{1+t}{2}\right)e_{H}$, or $\left(\frac{1-t}{2}\right)e_{H}$ where $H$ is a subgroup of $B$ such that $B/H$ is cyclic of order 9 or 3. We have two possibilties:

\begin{enumerate}
	\item $J=I_{e_{H}}$.
	
	Since the dimension of $I$ is 2, there exists a subgroup $H$ of $B$ such that $B/H$ is cyclic of order 3 and $J=(\F_{q}A)(\frac{1\pm t}{2})e_{H}$. By \cite[Theorem 4.2]{RP}, the weight of $J$ is 12 , but the weight of $\F_{q}D_{9}f$ is 15.
	
	\item $J=I_{e_{H_{1}}}\oplus I_{e_{H_{2}}}$.
	
	Since the dimension of $\F_{q}D_{9}f$ is 2, the dimension of $I_{e_{H_{1}}}$ and $I_{e_{H_{2}}}$ must be equal to 1. In this case, the order of $B/H_{1}$ and $B/H_{2}$ should be 2, which cannot happen since $|B|=9$.
\end{enumerate}

\end{document}